\tikzstyle{legend_general}=[rectangle, rounded corners, thin,
\DeclareMathOperator{\col}{col}
\newtheorem{theorem}{Theorem}[section]
\newtheorem{claim}[theorem]{Claim}
\newtheorem{lemma}[theorem]{Lemma}
\theoremstyle{definition}
\title{A note on the connected game coloring number} 
\author{Peter Bradshaw}
\address{Department of Mathematics, Simon Fraser University, Vancouver, Canada}
\email{pabradsh@sfu.ca}
\thanks{The author of this work has been partially supported by a supervisor's grant from the Natural Sciences and Engineering Research Council of Canada (NSERC)}
\begin{document}
\begin{abstract}
We consider the \emph{connected game coloring number} of a graph, introduced by Charpentier et al. as a game theoretic graph parameter that measures the degeneracy of a graph with respect to a connected version of the graph marking game. We establish bounds for the connected game coloring number of graphs of bounded treedepth and of $k$-trees. We also show that there exists an outerplanar $2$-tree with connected game coloring number $5$, which answers a question from [C. Charpentier, H. Hocquard, E. Sopena, and X. Zhu. A connected version of the graph coloring game. \textit{Discrete Appl. Math.}, 2020].
\end{abstract}
\maketitle
\section{Introduction}
Let $G$ be a finite graph. The \emph{coloring game} on $G$, introduced by Bodlaender \cite{Bodlaender}, is defined as follows. Two players, Alice and Bob, take turns coloring vertices of $G$ using colors from a set of $k$ colors. On each player's move, the player chooses an uncolored vertex from $V(G)$ and colors this vertex with one of the $k$ given colors. Alice moves first, and on each turn, both players are forbidden from coloring a vertex $v$ with a color already used at a neighbor of $v$; that is, both players must color $G$ properly. The game continues until all of $G$ is colored or there is no legal move. Alice's goal is to complete a proper coloring on $G$, and Bob's goal is to reach a game state in which $G$ has at least one uncolored vertex but no move is legal. The \emph{game chromatic number} of $G$, written $\chi_g(G)$, is defined as the minimum integer $k$ for which Alice has a winning strategy in the coloring game played with $k$ colors. The game chromatic number is related to the chromatic number by the following relation:
$$\chi(G) \leq \chi_g(G) \leq \Delta(G) + 1.$$
Informally, the game chromatic number gives a measure of how efficiently pairwise dependent events may be scheduled when scheduling is prone to errors or influence from uncooperative parties. From this perspective, Alice's moves represent an attempt to schedule events efficiently, and Bob's moves represent a worst-case error scenario. Computing $\chi_g(G)$ for an arbitrary graph $G$ is PSPACE-hard, making $\chi_g(G)$ more difficult to compute than the traditional chromatic number $\chi(G)$, unless NP$=$PSPACE \cite{Bodlaender}. However, bounds on the game chromatic number are known for several classes of graphs, including planar graphs \cite{ZhuRefined}, graphs of bounded genus, graphs of bounded treewidth \cite{ZhuKtrees}, and graphs of bounded acyclic chromatic number \cite{DinskiZhu}.

Related to the coloring game is the \emph{marking game}, first considered by U. Faigle et al. in \cite{KiersteadColoring} and first treated formally by X. Zhu in \cite{ZhuColoring}. In the marking game, Alice and Bob take turns marking vertices of a graph, with Alice marking first. In a play of the marking game on a graph $G$, each vertex $v$ receives a score equal to the number of neighbors of $v$ marked before $v$, and that play of the marking game receives an overall score equal to the maximum score of a vertex. Alice's goal in the marking game is to minimize the play score, and Bob's goal is to maximize the play score. The \emph{game coloring number} of a graph $G$, written $\col_{g}(G)$, is the minimum integer $k$ for which Alice has a strategy in the marking game on $G$ to limit the number of marked neighbors of each unmarked vertex at each game state to $k-1$---that is, $\col_{g}(G) - 1$ is the minimum play score that Alice can guarantee in the marking game on $G$. The game coloring number of a graph is related to the coloring number $\col(G)$ of a graph $G$, which is defined by Erd\H{o}s and Hajnal as one more than a graph's degeneracy \cite{ErdosColoring}. The game coloring number satisfies the following inequality:
$$\col(G) \leq \col_g(G) \leq \Delta(G) + 1.$$
It is also straightforward to see that $\chi(G) \leq \col(G)$ and that $\chi_g(G) \leq \col_g(G)$, as the coloring number and game coloring number of a graph are calculated by assuming a ``worst-case scenario" for an uncolored vertex $v$ at each partial coloring, in which all neighbors of $v$ are colored with different colors. Zs. Tuza and X. Zhu give a survey of results related to graph coloring games in \cite{ZhuTuza}.

Recently, Charpentier et al. introduced the notion of the \emph{connected coloring game} and \emph{connected marking game} for a graph $G$, which have the same rules as their traditional counterparts with the exception that Alice and Bob must play so that the set of played vertices always forms a connected set \cite{Charpentier}. These connected games give rise to the graph parameters of \emph{connected game chromatic number} and \emph{connected game coloring number}, written respectively as $\chi_{cg}(G)$ and $\col_{cg}(G)$, which are respectively defined in the same way as $\chi_g(G)$ and $\col_g(G)$, but with respect to the connected coloring game and connected marking game. Similarly to $\chi_g(G)$ and $\col_{g}(G)$, these connected parameters satisfy the following inequalities:
$$\chi(G) \leq \chi_{cg}(G) \leq \Delta(G) + 1;$$
$$\col(G) \leq \col_{cg}(G) \leq \Delta(G) + 1;$$
$$\chi_{cg}(G) \leq \col_{cg}(G).$$

Charpentier et al. also observe that the inequalities $\chi_{cg}(G) \leq \chi_g(G)$ and $\col_{cg}(G) \leq \col_g(G)$ hold for several classes of graphs, including bipartite graphs and outerplanar graphs, with the inequality often strict. However, it is still unknown whether there exists a graph for which $\chi_{cg}(G) > \chi_g(G)$ or $\col_{cg}(G) > \col_g(G)$ holds; that is, it is unknown whether the connected game restriction might help Bob on certain graphs.

Charpentier et al. show that if $G$ is bipartite, then $\chi_{cg}(G) = 2$, and if $G$ is outerplanar, then $\col_{cg}(G) \leq 5$. Apart from these bounds, little else is known about these connected game graph parameters. We will establish bounds for the connected game coloring number of graphs of bounded treedepth and for $k$-trees. Furthermore, we will show that there exists an outerplanar $2$-tree with connected game chromatic number $5$, which answers a question from \cite{Charpentier}.

\section{Graphs of bounded treedepth}
In this section, we consider graphs of bounded treedepth. We define treedepth as follows. Given a rooted tree $T$, we define the \emph{height} of $T$ as the number of vertices in the longest path $P \subseteq T$ with the root of $T$ as an endpoint. We furthermore define the closure of $T$ as the graph on $V(T)$ constructed by adding an edge from each vertex $v \in V(T)$ to all ancestors of $v$ and all descendants of $v$ with respect to the rooted structure of $T$. Then, for a graph $G$, we define the \emph{treedepth} of $G$ as the minimum integer $k$ for which there exists a rooted tree $T$ of height $k$ such that $G$ is a subgraph of the closure of $T$. It is straightforward to show that if $G$ is a connected graph of treedepth $k$ and $G$ is a subgraph of the closure of a rooted tree $T$ of height $k$, then the root of $T$ must be a vertex of $G$. Treedepth is of particular interest in computational applications of graph theory, as several intractible problems, such as graph isomorphism \cite{Bouland} and coloring reachability \cite{Wrochna}, are tractable on graphs of bounded treedepth.

It is easy to see that a graph $G$ of treedepth $1$ has $\col_g(G) = \col_{cg}(G) = 1$ and that a graph $G$ of treedepth $2$ has $\col_g(G) = \col_{cg}(G) = 2$; therefore, we will consider graphs of treedepth at least $3$. We will show that for a graph of treedepth $k \geq 3$, $\col_g(G) \leq 2k - 3$ and $\col_{cg}(G) \leq 2k - 3$, and we will see that both of these bounds are best possible.

\begin{theorem}
Let $k \geq 3$ be an integer, and let $G$ be a connected graph of treedepth $k$. Then $\col_g(G) \leq 2k - 3$, and $\col_{cg}(G) \leq 2k - 3$.
\label{thmConTD}
\end{theorem}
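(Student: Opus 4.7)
Fix a rooted tree $T$ of height $k$ such that $V(T) = V(G)$ and $G \subseteq \overline{T}$; since $G$ is connected, the root $r$ of $T$ lies in $V(G)$. For each $u \in V(G)$, write $P_u$ for the root-to-$u$ path in $T$, which has at most $k$ vertices.

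For the bound $\col_g(G) \leq 2k - 3$, I would have Alice play $r$ on her first move and, on each later move, play the \emph{shallowest} unmarked vertex on $P_u$, where $u$ denotes Bob's most recent move (and any legal vertex if $P_u$ is already fully marked). The central claim is that whenever a vertex $v$ at depth $d_v$ is about to be marked, the number of its marked $G$-neighbors is at most $2k-4$, which yields $\col_g(G) \leq 2k-3$. To prove this I split the $\overline{T}$-neighbors of $v$ into strict ancestors (at most $d_v - 1$ of them) and strict descendants. Because Alice always chooses a shallowest unmarked vertex on some $P_u$, a simple inductive argument shows she never marks a strict descendant of an unmarked vertex; so the marked strict descendants of $v$ are exactly Bob's moves inside $T_v \setminus \{v\}$. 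Each such Bob move forces Alice to mark a distinct strict ancestor of $v$ at depth $\geq 2$ (otherwise she would be forced to play $v$ itself), and since only $d_v - 2$ such ancestors exist, Bob gets at most $d_v - 2$ ``free'' moves inside $T_v$ before Alice is forced to play $v$. A short case analysis --- distinguishing whether $v$ is ultimately marked by Alice or by Bob, and handling the leaf case $d_v = k$ separately --- then gives the bound of $2(d_v-1) \leq 2k - 4$ marked neighbors in all cases, with tightness at $d_v = k-1$.

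For the connected version, the natural adjustment is to have Alice play the shallowest unmarked vertex on $P_u$ that is \emph{also} $G$-adjacent to the currently marked set. The main obstacle is that such a vertex may not exist on $P_u$: one can construct small examples in which connectivity of $G$ is witnessed by an unmarked branch of $T$ disjoint from $P_u$, while all unmarked vertices on $P_u$ have their $G$-neighbors entirely in the unmarked part of $G$. To handle this I would allow Alice, when no suitable vertex exists on $P_u$, to play a shallowest unmarked $T$-ancestor of some already-marked vertex that is $G$-adjacent to the marked set; this always exists because $G$ is connected and the marked set is a connected subgraph of $G$. The principal technical challenge will be verifying that this modified strategy still preserves the crucial property that Alice never marks a strict descendant of an unmarked vertex (or a suitable weakening of it), so that the same $2k - 4$ bound on marked $G$-neighbors continues to hold, yielding $\col_{cg}(G) \leq 2k - 3$.
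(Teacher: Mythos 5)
Your non-connected argument is essentially the paper's: Alice answers each Bob move $u$ by marking the shallowest unmarked vertex on the root-to-$u$ path, and the score of a non-leaf unmarked $v$ is bounded by (marked ancestors) plus (Bob's moves below $v$), each at most $k-2$. One repair is needed even there: your fallback ``any legal vertex if $P_u$ is already fully marked'' must be something like ``a globally shallowest unmarked vertex,'' since an arbitrary choice can be a strict descendant neighbor of an unmarked $v$, adding an unaccounted $+1$ to the $2k-4$ count. With that fix the bound on $\col_g$ goes through.

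The connected case contains a genuine gap, which you have correctly located but not closed. The property you hope to preserve --- that Alice never marks a strict descendant of an unmarked vertex --- is simply false once Alice is constrained to play adjacent to the marked set: an unmarked ancestor $v$ of Bob's vertex may have no marked $G$-neighbor, forcing Alice to skip it and mark something deeper. Moreover, your proposed fallback (``a shallowest unmarked $T$-ancestor of some already-marked vertex that is $G$-adjacent to the marked set'') need not exist: every unmarked $G$-neighbor of the marked set can be a $T$-descendant of all of its marked neighbors, so existence does not follow from connectivity. The paper closes exactly this gap with the weakening you would need to find. Alice marks the least-level unmarked \emph{available} ancestor of Bob's vertex (and, failing that, an available vertex of least level); she may thereby mark a descendant neighbor of an unmarked $v$, but (i) at most once, because the first marked descendant neighbor of $v$ makes $v$ itself available, after which Alice always prefers $v$ or one of its ancestors to any descendant of $v$; and (ii) whenever this happens, $v$ cannot be adjacent to the root $r$ (otherwise $v$ would have been available from the first move and Alice would never have reached a descendant of $v$), so $v$ has at most $k-3$ marked ancestor neighbors, and the total is at most $(k-3)+(k-2)+1 = 2k-4$. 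Without item (ii) the naive count gives $2k-3$, one too many, so this compensation is the essential missing step. A pleasant byproduct of the paper's version is that the same single strategy (Alice always plays available vertices, Bob is unrestricted) proves both bounds at once.
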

\begin{proof}
Let $G$ be a graph, and let $T$ be a rooted tree of height $k$ such that $G$ is a subgraph of the closure of $T$. Let $r \in V(G)$ be the root of $T$. For a vertex $v \in V(G)$, we will refer to \emph{ancestors} and \emph{descendants} of $v$ with respect to the rooted structure of $T$. Furthermore, we will refer to the \emph{level} of $v$ as the distance from $r$ to $v$ in $T$. We describe a marking game strategy on $G$ by which Alice can guarantee that any unmarked vertex of $G$ has at most $2k-4$ unmarked neighbors. Alice will only mark vertices that are adjacent to already marked vertices, but we will give Bob the freedom to mark any vertex that he wishes. At any point in the game, if an unmarked vertex $v$ is adjacent to a marked vertex, then we say that $v$ is \emph{available}.

Alice begins the game by marking $r$. Then, for each vertex $v \in V(G)$ that Bob marks, Alice marks the vertex $u$ of least level such that $u$ is an ancestor of $v$ and $u$ is unmarked and available. If no such vertex $u$ exists, then Alice arbitrarily chooses an available vertex $w \in V(G)$ of least level and marks $w$. Alice continues this strategy until all vertices of $G$ are marked. 

We claim that at any point in the game, for any unmarked vertex $v \in V(G)$, Alice has marked at most one descendant neighbor of $v$. Indeed, suppose that on one of Alice's turns before $v$ is marked, at least one descendant neighbor of $v$ has already been marked. If Bob has not just marked a descendant of $v$, then Alice's strategy will not consider any descendant of $v$ for marking. If Bob has just marked a descendant of $v$, then as $v$ is available to Alice on this turn, Alice will choose to mark $v$ over any descendant of $v$. Therefore, we see that Alice marks at most one descendant neighbor of $v$ before $v$ is marked, and we see furthermore that in this case, the descendant neighbor of $v$ that Alice marks must be the first marked descendant neighbor of $v$ of the entire game. 

Next, we claim that that each step of the game, for any unmarked vertex $v \in V(G)$, at most $k - 2$ descendant neighbors of $v$ are marked by Bob. To prove our claim, we show that after each of Bob's turns, the number of descendant neighbors of $v$ marked by Bob is at most the number of marked ancestors of $v$. This is certainly true after the first turn.  Furthermore, each time Bob marks a descendant neighbor $v'$ of $v$, Alice marks the ancestor $w$ of $v'$ such that $w$ is available and of least level. When this happens, $w$ must either be equal to $v$ or be an ancestor of $v$ by the fact that $v$ is an available vertex. Therefore, each time that Bob increases the number of marked descendant neighbors of $v$ by one, Alice either increases the number of marked ancestors of $v$ by one or marks $v$.  Therefore, as a non-leaf vertex has at most $k - 2$ ancestors, the claim holds by an inductive argument. 

Now, we calculate the maximum number of marked neighbors of an unmarked vertex $v \in V(G)$. If $v$ is a leaf of $T$, then $v$ has at most $k - 1 < 2k - 3$ marked neighbors. Otherwise, $v$ has at most $k - 2$ ancestors. If Alice did not mark a descendant neighbor of $v$, then $v$ has at most $k-2$ marked ancestors and at most $k-2$ marked descendant neighbors, for a total of at most $2k - 4$ marked neighbors. If Alice marked a descendant neighbor of $v$, then $vr \not \in E(G)$; otherwise, $v$ would be available for Alice to mark after the first move, and Alice would mark $v$ before any descendant of $v$. Hence, in this case, $v$ has at most $k - 3$ marked ancestor neighbors and at most $k - 1$ marked descendant neighbors, for a total of at most $2k - 4$ marked neighbors. 

As Alice's strategy obeys the connected marking game's restriction, this shows that $\col_{cg}(G) \leq 2k - 3$. As Bob's strategy is not required to obey the connected marking game's restriction, this also shows that $\col_g(G) \leq 2k - 3$.
\end{proof}

We will show that these bounds are best possible. In fact, we will show that Bob has a strategy in the graph coloring game to guarantee that a certain graph of treedepth $k$ may not be properly colored with fewer than $2k - 4$ distinct colors. Bob's strategy only requires him to color vertices that are adjacent to vertices that have already been colored, so Bob's strategy will apply to both the connected game and the non-connected game. 

\begin{theorem}
Let $k \geq 3$ be an integer. There exists a connected graph $G$ of treedepth $k$ such that $\chi_g(G) = \col_g(G) = 2k - 3$, $\chi_{cg}(G) = \col_{cg}(G) = 2k - 3$.
\end{theorem}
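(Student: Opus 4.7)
The plan is to construct a connected graph $G_k$ of treedepth $k$ and exhibit a strategy for Bob in the standard coloring game with $2k-4$ colors that leaves Alice with no legal move. Following the hint before the statement, Bob's strategy will only color vertices adjacent to already-colored ones, so the very same strategy is valid in the connected coloring game. Combined with the upper bounds $\col_g(G_k),\col_{cg}(G_k)\le 2k-3$ from Theorem~\ref{thmConTD} and the standard inequalities $\chi_g\le\col_g$ and $\chi_{cg}\le\col_{cg}$, this yields all four equalities simultaneously.

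For the construction, I would take $G_k$ to be the closure of a rooted tree $T_k$ of height exactly $k$ in which every non-leaf vertex has a very large number of children (some function of $k$); the complete $m$-ary tree of depth $k-1$ for sufficiently large $m$ is a clean candidate. The resulting $G_k$ is connected, has treedepth exactly $k$ (the tree has height $k$, and any root-to-leaf path of $T_k$ induces a $K_k$ in the closure), and each internal vertex has many descendants that are pairwise non-adjacent in $G_k$. The base case $k=3$ is handled immediately by $G_3=K_3$.

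Bob's strategy will dynamically commit to a root-to-leaf path of $T_k$ and target some vertex $u$ on that path, aiming to accumulate $2k-4$ distinct colors at the neighbors of $u$ before $u$ is colored: $k-2$ on the ancestors of $u$, which form a clique and therefore automatically take distinct colors, and $k-2$ on suitable descendant neighbors of $u$ in $G_k$. The high branching of $T_k$ allows Bob to maintain many candidate targets in parallel, so whenever Alice defends one candidate Bob pivots to another. The hard part is the descendant side: since the descendants of $u$ that are neighbors of $u$ in $G_k$ are mutually non-adjacent, Alice can in principle reuse colors among them. To force distinct colors, Bob's strategy must coordinate moves across branches so that each potential color-reuse option at some still-live target is already blocked by an earlier Bob move; a pigeonhole/potential argument exploiting the large branching factor should show that after $k-2$ forcing phases on the descendant side, some target $u$ becomes trapped. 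Verifying that Bob's responses remain legal and always adjacent to the already-colored set at every step, and that no single Alice move can permanently rescue all remaining candidates, will be the main technical content of the proof.
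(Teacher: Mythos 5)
Your overall frame is right---build the closure of a height-$k$ rooted tree, give Bob a strategy that only colors vertices adjacent to already-colored ones so that it works in both the connected and unconnected games, and combine the resulting lower bound of $2k-3$ with the upper bound from Theorem~\ref{thmConTD}. But the heart of the proof is exactly the part you defer as ``the main technical content'': forcing $k-2$ distinct colors onto descendant neighbors of some target vertex. Your plan for that step---target an internal vertex of a high-branching tree, maintain many candidates in parallel, and coordinate forcing phases across branches with a pigeonhole argument---is not carried out, and it is attacking the problem from the hard direction: since Alice can freely reuse colors on pairwise non-adjacent descendants, it is genuinely unclear that any amount of branching lets Bob \emph{force} her to use distinct colors there, and no potential function is exhibited.

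The missing idea is a choice of construction that makes the descendant side trivial rather than adversarial. The paper takes a single path $p_1,\dots,p_{k-1}$ and attaches $2k-4$ leaves to $p_{k-1}$; in the closure, every leaf is adjacent to \emph{every} path vertex, and the path is a $(k-1)$-clique. Now Bob does not need to force Alice to do anything on the descendant side: he installs the distinct colors himself. Each of his moves colors a fresh leaf with a color not yet appearing anywhere; this is always legal (in both games) because a leaf's colored neighbors all lie on the path, which cannot carry all $t\le 2k-4$ colors unless Bob has already won. Since Alice must eventually color all $k-1$ path vertices, Bob accumulates $k-2$ distinctly colored leaves first, and the last path vertex to be colored then sees $k-2$ colors on the path-clique plus $k-2$ disjoint colors on the leaves, i.e.\ $2k-4$ colors, so it cannot be colored. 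Incidentally, your complete $m$-ary tree contains this configuration (any leaf-parent together with its $m$ leaf children and its root path), so your graph would also work---but only if Bob plays the simple single-branch strategy above, not the multi-target pivoting strategy you sketch.
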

\begin{proof}
Consider a path $P = (p_1, p_2, \dots, p_{k-1})$ and a tree $T$ with root $p_1$ obtained by attaching $2k-4$ leaves $l_1, \dots, l_{2k-4}$ to $p_{k-1}$. Let $G$ then be the closure of $T$. Clearly $G$ has treedepth $k$. As $\omega(G) = \chi(G) = k$, Alice needs at least $k$ colors to win the game. We will show that if Alice and Bob play the (connected) graph coloring game with $t$ ($k \leq t \leq 2k - 4$) colors, then Bob has a winning strategy.

If Alice first colors a vertex of $P$ with a color (say $1$), then Bob colors the leaf $l_1$ with color $2$. If Alice first colors a leaf $l_i$ with a color (say $1$), then Bob colors $p_1$ with the color $2$. After this first move, Bob's strategy will be to color as many leaves $l_i$ as possible with distinct colors other than $1$ and $2$. Note that as some vertex of $P$ is colored after the first turn, all leaves $l_i$ will subsequently be available to Bob in the connected game. As there are $2k - 4$ leaves $l_i$, Bob will either succeed in letting at least $k - 2$ leaves $l_i$ be colored with distinct colors before all vertices of $P$ are colored, or Bob will reach a position in which each of the $t$ colors appears either at a vertex of $P$ or at a leaf $l_i$, in which case no more vertices of $P$ may be properly colored, and Bob wins the game. Hence we assume that Bob lets $k-2$ leaves $l_i$ be colored with distinct colors.

Suppose, for the sake of contradiction, that Alice wins the game. This implies that a proper coloring on $G$ is completed. Consider the state of the game immediately before the last vertex $v$ of $P$ is colored. As $v \in V(P)$, $v$ is adjacent to $k-2$ other vertices of $P$ and all leaves $l_i$. Additionally, as $V(P)$ induces a clique in $G$, each of these $k-2$ neighbors of $v$ in $V(P)$ have distinct colors. Furthermore, $v$ is adjacent to leaves $l_i$ of at least $k-2$ additional distinct colors, as each vertex of $V(P)$ of adjacent to each leaf $l_i$, these $k-2$ additional distinct colors do not coincide with the $k-2$ colors of $V(P)$. Therefore, altogether, the neighborhood of $v$ contains vertices of at least $2k - 4$ distinct colors, which contradicts the assumption that $v$ was colored successfully. Therefore, $\chi_g(G) = 2k - 3$, and $\chi_{cg}(G) = 2k - 3$.
\end{proof}

\section{$k$-trees}
In this section, we will consider $k$-trees. A $k$-tree is defined as a graph that may be constructed by starting with a copy of $K_k$ and then iteratively adding new vertices of degree $k$ whose neighbors induce a $K_k$. We will show that for any $k$-tree $G$, $\col_{cg}(G) \leq 3k$, and we will show that for any $2$-tree $G$, $\col_{cg}(G) \leq 5$. X. Zhu shows that for a partial $k$-tree $G$, which is defined as a subgraph of a $k$-tree, $\col_g(G) \leq 3k + 2$ \cite{ZhuKtrees}, and J. Wu and X. Zhu show furthermore that this bound is tight \cite{WuLower}. Therefore, our bound shows that the connected game condition strictly helps Alice in the marking game on $k$-trees.

In \cite{Charpentier}, Charpentier et al. ask if $\col_{cg}(H) \leq \col_{cg}(G)$ holds for every subgraph $H$ of a graph $G$. If this inequality is true, then this would imply that our upper bounds on the connected game coloring number of $k$-trees would hold for partial $k$-trees. In particular, this would show that partial $2$-trees have connected game coloring number at most $5$, which would generalize a result from \cite{Charpentier} stating that outerplanar graphs have a connected game coloring number of at most $5$. 

In order to prove our results, we will use the \emph{activation strategy}, which was first introduced by X. Zhu in \cite{ZhuKtrees}. For a concise description of the strategy, we refer the reader to \cite{ZhuTuza}. Our main results of this section will show that in the connected marking game on $k$-trees, the activation strategy gives a better upper bound on the game coloring number than in the non-connected marking game, and that the activation strategy is at times best possible for $2$-trees.

We will consider a fixed $k$-tree $G$, and we will give $V(G)$ an ordering $v_1, \dots, v_n$ such that $v_1, \dots, v_k$ induce a clique in $G$ and such that for any $v_i, i \geq k + 1$, the neighbors $v_j$ of $v_i$ with $j < i$ induce a $k$-clique in $G$. For a vertex $v_i$, we say that a vertex $v_j \in N(v_i)$ with $j < i$ is a \emph{back-neighbor} of $v_i$. We say that all other neighbors of $v_i$ are \emph{fore-neighbors}.

We describe Zhu's activation strategy for the sake of completeness. Throughout the game, Alice will construct a digraph $D$ for which $V(D) = V(G)$ that will help her decide which moves to play. Initially, $D$ has no arcs, and as each player takes turns, Alice will add arcs to $D$. For a vertex $v \in V(D)$, we let $d^{-}(v)$ refer to the in-degree of $v$ in $D$. On each of her turns, Alice will \emph{process} a vertex. When we say that Alice processes a vertex $v_i \in V(G)$, we mean that Alice executes the following procedure:
\begin{enumerate}
\item If $v_i$ has no unmarked back-neighbor and $v_i$ is unmarked, then Alice marks $v_i$ and stops.
\item If $v_i$ has no unmarked back-neighbor and $v_i$ is marked, then Alice marks any vertex with no unmarked back-neighbor and stops.
\item Otherwise, let $v_j$ be the unmarked back-neighbor of $v_i$ with smallest index $j$. If $d^{-}(v_j) \geq 1$, then Alice marks $v_j$, adds the arc $v_i v_j$ to $D$, and stops. If $d^{-}(v_j) = 0$, then Alice adds the arc $v_i v_j$ to $D$, and then processes $v_j$.
\end{enumerate}
Now, with this definition in place, Alice's strategy is as follows. Alice begins the game by marking $v_1$. Then, when Bob marks a vertex $v_i$, Alice processes $v_i$.

We say that a vertex is \emph{active} if it is marked or if it is incident to an arc of $D$. For vertices $v,w \in V(G)$, we say that $w$ is \emph{reachable} from $v$ if there exists a directed path in $D$ from $v$ to $w$. We will need a lemma about $D$.

\begin{lemma}
Consider a state of a connected marking game on $G$. Let $C \subseteq G$ be the connected subgraph of $G$ induced by marked vertices. For any vertex $v_i \in C$, every vertex $v_j$ reachable from $v_i$ either belongs to $C$ or is adjacent to a vertex of $C$.
\label{lemmaLegal}
\end{lemma}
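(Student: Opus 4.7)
The plan is to prove this lemma by induction on the length $\ell$ of a directed path from $v_i$ to $v_j$ in $D$. The base case $\ell = 0$ is immediate since $v_j = v_i \in C$.

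For the inductive step, I would focus on the final arc $w_{\ell-1} \to w_\ell$ of such a path, where $w_\ell = v_j$. By the definition of arcs in $D$, $w_\ell$ is a back-neighbor of $w_{\ell-1}$, so $w_\ell w_{\ell-1} \in E(G)$. Next, I would examine the moment in the game when Alice added this arc. If $d^{-}(w_\ell) \geq 1$ at that moment, then rule (3) of the activation procedure immediately marks $w_\ell$, so $w_\ell \in C$. Otherwise $d^{-}(w_\ell) = 0$, and Alice proceeds to process $w_\ell$: either rule (1) applies and Alice marks $w_\ell$, or rule (3) applies and $w_\ell$ has an unmarked back-neighbor. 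In the latter sub-case, the back-neighbors of $w_\ell$ form a $k$-clique in $G$, so any marked back-neighbor of $w_\ell$ is adjacent to $w_\ell$, which shows that $w_\ell$ is adjacent to $C$.

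The hardest case I expect is when $w_\ell$ has no marked back-neighbor and Alice's processing chain extends past $w_\ell$. To handle this, I would invoke the inductive hypothesis on the shorter path $v_i \to \dots \to w_{\ell-1}$ to locate a marked vertex near $w_{\ell-1}$, and exploit the structural fact that the entire propagation chain containing $w_\ell$ begins with a vertex that Bob marked and terminates with a vertex that Alice marked. Tracing this chain through the sequence of overlapping back-neighbor $k$-cliques in $G$, together with the legality requirement on moves in the connected game, should yield the required marked neighbor of $w_\ell$.
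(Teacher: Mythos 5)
Your induction setup and the easy sub-cases (where Alice's processing marks $w_\ell$, putting it in $C$) are fine, but the essential case --- $w_\ell$ left unmarked --- is not actually proved, and what you sketch for it points at the wrong objects. In the sub-case where rule (3) fires with $d^{-}(w_\ell)=0$, you write that ``any marked back-neighbor of $w_\ell$ is adjacent to $w_\ell$'': that is vacuously true (a back-neighbor is by definition a neighbor) and does nothing to establish that $w_\ell$ \emph{has} a marked neighbor, which is the whole content of the lemma in this case. You then defer this to a ``hardest case'' handled by tracing the propagation chain from a Bob-marked vertex to an Alice-marked vertex through overlapping cliques; that is not a proof, and the Bob-starts/Alice-ends structure of the chain is not the relevant fact.

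The missing idea is a separator argument applied to $w_{\ell-1}$, not to $w_\ell$. In the chosen $k$-tree ordering, the back-neighbors of $w_{\ell-1}$ form a clique that either contains $v_1$ or separates $w_{\ell-1}$ and its fore-neighbors from $v_1$. By the inductive hypothesis (applied at the \emph{current} game state, which is what the lemma is about --- not at the moment the arc was added), $w_{\ell-1}$ is in $C$ or adjacent to a vertex of $C$; since $C$ is connected and contains $v_1$, any path in $C$ from such a marked vertex to $v_1$ must cross this separating clique, so some back-neighbor of $w_{\ell-1}$ is marked. Because $w_\ell$ is itself a back-neighbor of $w_{\ell-1}$ and the back-neighbors form a clique, $w_\ell$ is either that marked vertex or adjacent to it. Without this use of the connectivity of $C$ together with the clique-separator property of $k$-trees, the argument does not close; note also that your case analysis at ``the moment the arc was added'' is the wrong time frame, since the lemma quantifies over an arbitrary later state of the game.
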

\begin{proof}
As Alice begins the game by marking $v_1$, we assume that $v_1 \in C$. We induct on the distance $d$ from $v_i$ to $v_j$ in $D$. The statement clearly holds for $d = 0$. For $d \geq 1$, consider a vertex $v_j$ reachable in $D$ from $v_i$ in $d$ steps. To reach $v_j$ from $v_i$, there must exist a vertex $v_l$ at distance $d-1$ from $v_i$ such that $v_j$ is an out-neighbor of $v_l$. As the back-neighbors of $v_l$ either contain $v_1$ or form a cut separating $v_l$ and its fore-neighbors from $v_1$, it must follow that some back-neighbor of $v_l$ is marked; otherwise, $C$ would not be a connected set. Furthermore, all of the back-neighbors of $v_l$ form a clique, and therefore, $v_j$ either belongs to $C$ or is adjacent to a vertex of $C$. This completes induction.
\end{proof}

We are now ready to establish our upper bound for the connected game coloring number of $k$-trees.
\begin{theorem}
Let $k \geq 2$, and let $G$ be a $k$-tree. Then $\col_{cg}(G) \leq 3k$. Furthermore, if $k = 2$, then $\col_{cg}(G) \leq 5$.
\end{theorem}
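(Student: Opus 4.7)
The plan is to apply Zhu's activation strategy exactly as defined above, with no modification. The preceding Lemma already handles legality in the connected game: whenever Alice marks a vertex $v_j$ as the outcome of processing, $v_j$ is reachable in $D$ from the vertex Bob has just marked (which lies in the connected marked component $C$) or from $v_1$, and so by the preceding Lemma, $v_j$ is adjacent to $C$. Hence Alice's moves are legal in the connected marking game, and only the counting must be refined to exploit the connectivity restriction.

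For the $3k$ bound, I would fix an unmarked vertex $v_i$ at an arbitrary state and bound $|N(v_i) \cap M|$, where $M$ is the current marked set. The back-neighbors of $v_i$ contribute at most $k$. For the marked fore-neighbors, I would use the standard activation-strategy charging: each marked fore-neighbor $v_j$ of $v_i$ was processed at some point, and that processing chain either marks a back-neighbor of $v_i$, or marks $v_i$ itself (impossible, since $v_i$ is unmarked), or adds an arc into one of the $k$ back-neighbors of $v_i$. Arguing that at most two such charges fall on each back-neighbor of $v_i$ gives at most $2k-1$ marked fore-neighbors, hence at most $3k-1$ marked neighbors in total, so $\col_{cg}(G) \leq 3k$. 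The saving of $2$ over Zhu's non-connected bound of $3k+2$ comes directly from the preceding Lemma: in the connected game, Bob cannot mark a fresh fore-neighbor of $v_i$ unless some back-neighbor of $v_i$ (or an ancestor of one) is already marked, which eliminates the two extra fore-neighbors that Zhu's original analysis must allow.

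For the $k=2$ refinement, the general bound only gives $\col_{cg}(G) \leq 6$, and one more neighbor must be shaved off. Here I would exploit the rigidity of $2$-trees: every vertex of index at least $3$ has exactly two back-neighbors, themselves adjacent, so the back-chains traced during processing are extremely short. I would case-split on whether $v_1$ is a back-neighbor of $v_i$ and on which of $v_i$'s two back-neighbors is marked first, and show that one of the two potential charges on each back-neighbor is incompatible with the connected-game constraint and therefore cannot be realized. I expect this last step to be the main obstacle: the general $3k$ bound is a clean adaptation of Zhu's non-connected argument, but pushing the $k=2$ bound from $6$ down to $5$ requires a delicate combinatorial case analysis specific to the $2$-tree structure, together with a careful second use of the reachability Lemma to rule out one of the near-extremal configurations.
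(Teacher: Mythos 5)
Your framework is the paper's: Alice plays Zhu's activation strategy unchanged, legality in the connected game follows from Lemma \ref{lemmaLegal}, and the improvement over Zhu's $3k+2$ must come from the counting. But the counting as you state it does not assemble into the claimed bounds, and the place where connectivity actually enters is not where you put it. In the paper, the charge from a marked fore-neighbor $w$ of $v$ is an arc $wx$ with $x \in \{v\} \cup B$, where $B$ is the set of back-neighbors of $v$ still unmarked at the moment the \emph{first} fore-neighbor of $v$ is marked. Connectivity is used exactly once: the back-neighbors of $v$ either contain $v_1$ or separate $v$ (hence every fore-neighbor of $v$) from $v_1$, so some back-neighbor of $v$ must be marked before any fore-neighbor of $v$ can be, giving $|B| \le k-1$ and a receiving set $\{v\}\cup B$ of size at most $k$. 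Each vertex of $D$ has in-degree at most $2$, and one of the at most $2k$ in-arcs of $\{v\}\cup B$ is the arc from $v$ itself into $B$, yielding at most $2k-1$ marked fore-neighbors and $3k-1$ marked neighbors in all. Your version has the charges landing only on the $k$ back-neighbors of $v_i$; but the out-arc of $w$ can point at $v_i$ itself (for $k=2$ it always does), and ``at most two charges on each of $k$ back-neighbors'' gives $2k$, not $2k-1$ --- the $-1$ and the saving of $2$ are asserted rather than derived from the mechanism you describe. Note also that the saving does not come from Lemma \ref{lemmaLegal} (which is used only for legality of Alice's moves) but from the separator property of the back-neighborhood, and ``some back-neighbor or an ancestor of one is marked'' is too weak: you need a back-neighbor of $v_i$ itself.

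The $k=2$ refinement is where the proposal genuinely stops short: you correctly see that the general argument only gives $6$, but you leave the last step as an anticipated ``delicate case analysis'' on which back-neighbor is marked first, with one of two charges per back-neighbor to be excluded. The paper's argument is different and short: for $k=2$ a fore-neighbor $w$ of $v$ has exactly one back-neighbor $u$ other than $v$, and by the same separator/connectivity observation $u$ must be marked before $w$ is, so $v$ is the only unmarked back-neighbor of $w$ and the out-arc of $w$ is $wv$. Since $d^{-}(v)\le 2$, the vertex $v$ has at most two marked fore-neighbors, hence at most four marked neighbors. The missing idea is thus that for $k=2$ \emph{all} charges concentrate on $v$ itself, where the in-degree bound of $2$ bites directly, rather than one near-extremal configuration per back-neighbor being ruled out.
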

\begin{proof}
We let Alice follow the activation strategy as described above. We show that this strategy limits the number of marked neighbors of any unmarked vertex to $3k-1$ when $k \geq 3$, and to $4$ when $k \geq 2$.

First, we will show that Alice's strategy is a legal strategy; that is, we show that apart from the first move, Alice always marks a vertex that is adjacent to an already marked vertex. If Alice marks a vertex $v_i$ by either (1) or (2) of the processing procedure, then $v_i$ certainly has a marked neighbor. If Alice marks a vertex $v_j$ by (3) of the processing procedure, then $d^-(v_j) = 1$ must have held before Alice's move. This implies that $v_j$ is reachable from some vertex marked by Bob, and so by Lemma \ref{lemmaLegal}, $v_j$ is adjacent to a marked vertex.

%Consider a game state on $G$ in which some vertices are marked and some vertices are not marked. Given a dipath $P_i$ and a vertex $u \in V(P_i)$, we define $P_i(u) \subseteq P_i$ as the directed subpath of $P_i$ beginning at $u$ and ending at $v_i$. If $u \not \in V(P_i)$, we define $P_i(u) = \emptyset$. Next, for an unmarked vertex $v \in V(G)$, let $Q_v$ be the family of dipaths $P_i(v)$ for which $v \in V(P_i)$. We define the danger of $v$, written $\dang(v)$, to be the size of a maximum set of dipaths in $Q_v$ that are disjoint except at $v$. 

We will show by induction on the number of moves that Alice has a strategy to ensure that after each of her moves, each unmarked vertex $v_j$ satisfies $d^-(v_j) \leq 1$. This condition certainly holds after Alice's first move. Suppose that the condition holds at some point of the game after Alice's move. On Bob's subsequent move, Bob marks a vertex $v_i$, and Alice processes $v_i$. During the processing procedure, if at any point Alice adds an arc to $D$ that causes a vertex $v_j$ to have $d^-(v_j) = 2$, then the processing procedure terminates, and Alice marks $v_j$. As each new arc of $D$ can only increase the in-degree of one vertex, it must follow in this case that $v_j$ is the only vertex with $d^-(v_j) = 2$. Therefore, after Alice marks $v_j$, each unmarked vertex satisfies has in-degree at most $1$. Furthermore, as the in-degree of a marked vertex may not increase, the same argument shows that each vertex of $D$ has in-degree at most $2$.

We now consider an unmarked vertex $v$ at some point $(*)$ of the game and compute an upper bound for the number of neighbors of $v$ that are marked at $(*)$. If $v$ has no marked fore-neighbor at $(*)$, then $v$ has at most $k$ marked neighbors, and the theorem follows. Otherwise, let $B$ be the set of back-neighbors of $v$ that were unmarked immediately before the first fore-neighbor of $v$ was marked. As the back-neighbors of $v$ either contain $v_1$ or separate $v$ from $v_1$, at least one back-neighbor of $v$ must be marked before any fore-neighbor of $v$ is marked, and hence $|B| \leq k - 1$. Now, suppose a fore-neighbor $w$ of $v$ is marked at some point in the game before $(*)$. If $w$ is marked by Bob, then Alice processes $w$ and adds an arc $wx$ to $D$, where $x \in \{v\} \cup B$. If $w$ is marked by Alice, then as $w$ has an unmarked back-neighbor, Alice must have marked $w$ because $d^-(w) = 2$. This implies that before Alice began her move, $d^-(w) = 1$, and as $w$ was unmarked and had an unmarked back-neighbor, there exists an arc $wx$ in $D$, where $x \in \{v\}\ \cup B$. 

Finally, we observe that if $B$ is empty, then as there exists an arc $wv$ for each marked fore-neighbor of $v$, $v$ has at most two marked fore-neighbors. As $v$ has at most $k$ marked back-neighbors, the total number of marked neighbors of $v$ at the point $(*)$ is $k+2$, which proves the theorem for all values $k \geq 2$. Otherwise, $|B| \geq 1$, and so when $d^-(v)$ first increased to $1$ (if ever), Alice must have added an arc $vb$ to $D$ for some $b \in B$. As each vertex of $\{v\} \cup B$ has in-degree at most $2$, this altogether implies that $v$ has at most $2|\{v\} \cup B| - 1 \leq 2k - 1$ marked fore-neighbors at $(*)$. As $v$ has at most $k$ back-neighbors, $v$ altogether has at most $3k - 1$ marked neighbors at $(*)$, which completes the proof for $k \geq 3$. 

%Now, when a fore-neighbor $w$ of $v$ that is marked, $w$ must contribute $1$ to either to $\dang(v)$ or to the danger of a vertex of $B$. Furthermore, the first fore-neighbor of $v$ that causes $\dang(v)$ to equal $1$ must also cause a vertex of $B$ to have danger $1$. As each vertex of $G$ has a maximum danger of $2$, this implies that the maximum number of marked fore-neighbors of $v$ is $2|B \cup \{v\}| - 1 = 2k - 1$. As $v$ has $k$ back-neighbors, this implies that the total number of marked neighbors of $v$ is at most $3k - 1$. This completes the proof for the case that $k \geq 3$. 

If $k = 2$, we may obtain a slightly better upper bound by noting that for a fore-neighbor $w$ of $v$, $v$ is the only unmarked back-neighbor of $w$, and hence for every marked fore-neighbor $w$ of $v$, there exists an arc $wv$ in $D$. As $d^-(v) \leq 2$, this implies that $v$ has at most two marked fore-neighbors at $(*)$, for a total of at most four marked neighbors altogether. This completes the proof for the case that $k = 2$. 
\end{proof}

Finally, we show that for a $2$-tree $G$, an upper bound of $5$ for $\col_{cg}(G)$ is best possible. Furthermore, we show that this upper bound may not be improved by restricting ourselves to outerplanar $2$-trees. This answers a question of Charpentier et al. in \cite{Charpentier} which asks if $5$ is the best possible upper bound for the connected game coloring number of outerplanar graphs.

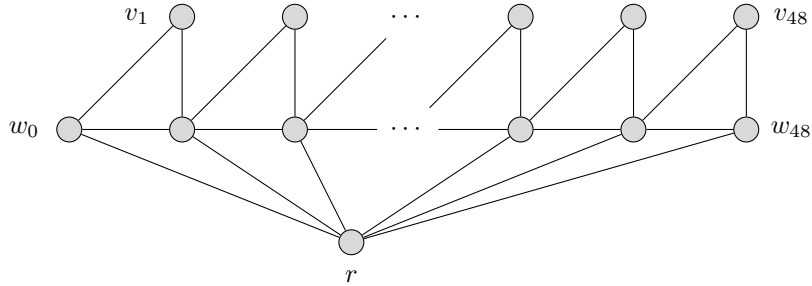
\begin{figure}
\begin{tikzpicture}
[scale=1.5,auto=left,every node/.style={circle,fill=gray!30}]

\node (z) at (2.5,0.7) [fill = white] {$r$};
\node (z) at (0.6,3) [fill = white] {$v_1$};
\node (z) at (6.4,3) [fill = white] {$v_{48}$};
\node (z) at (-0.4,2) [fill = white] {$w_0$};
\node (z) at (6.4,2) [fill = white] {$w_{48}$};

\node (r) at (2.5,1) [draw = black] {};
\node (a1) at (0,2) [draw = black] {};
\node (a2) at (1,2) [draw = black] {};
\node (a3) at (2,2) [draw = black] {};
\node (a4) at (3,2) [fill = white] {$\cdots$};
\node (a5) at (4,2) [draw = black] {};
\node (a6) at (5,2) [draw = black] {};
\node (a7) at (6,2) [draw = black] {};
%\node (b1) at (0,2) [draw = black] {};
\node (b2) at (1,3) [draw = black] {};
\node (b3) at (2,3) [draw = black] {};
\node (b4) at (3,3) [fill = white] {$\cdots$};
\node (b5) at (4,3) [draw = black] {};
\node (b6) at (5,3) [draw = black] {};
\node (b7) at (6,3) [draw = black] {};

\foreach \from/\to in {r/a1,r/a2,r/a3,r/a5,r/a6,r/a7,a1/b2,a2/b3,a3/b4,a4/b5,a5/b6,a6/b7,a1/a2,a2/a3,a3/a4,a4/a5,a5/a6,a6/a7,a2/b2,a3/b3,a5/b5,a6/b6,a7/b7}
    \draw (\from) -- (\to);
\end{tikzpicture}
\caption{The depicted graph $G$ is an outerplanar $2$-tree on $98$ vertices. The vertices in the middle row from left to right are $w_0, w_1, \dots, w_{48}$. The vertices in the top row from left to right are $v_1, \dots, v_{48}$.} 
\label{fig:5}
\end{figure}

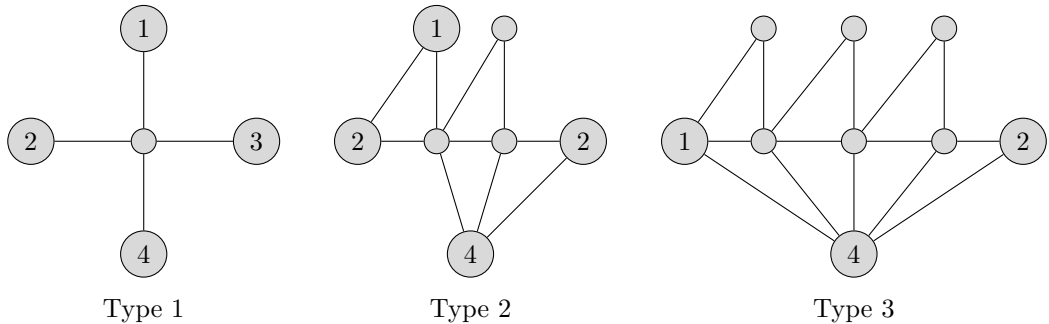
\begin{figure}
\begin{tikzpicture}
[scale=1.5,auto=left,every node/.style={circle,fill=gray!30}]
\node (z) at (1,0.5) [fill = white] {Type 1};
\node (z) at (2.5,0.5) [fill = white] {};
\node (r) at (1,1) [draw = black] {$4$};
\node (a1) at (0,2) [draw = black] {$2$};
\node (a2) at (1,2) [draw = black] {};
\node (a3) at (2,2) [draw = black] {$3$};
\node (b2) at (1,3) [draw = black] {$1$};
\foreach \from/\to in {r/a2,a2/b2,a1/a2,a2/a3}
    \draw (\from) -- (\to);
\end{tikzpicture}
\begin{tikzpicture}
[scale=1.5,auto=left,every node/.style={circle,fill=gray!30}]
\node (z) at (1.5,0.5) [fill = white] {Type 2};
\node (z) at (3,0.5) [fill = white] {};
\node (r) at (1.5,1) [draw = black] {$4$};
\node (a1) at (0.5,2) [draw = black] {$2$};
\node (a2) at (1.2,2) [draw = black] {};
\node (a3) at (1.8,2) [draw = black] {};
\node (a4) at (2.5,2) [draw = black] {$2$};
\node (b2) at (1.2,3) [draw = black] {$1$};
\node (b3) at (1.8,3) [draw = black] {};
\foreach \from/\to in {r/a2,r/a3,r/a4,a2/b2,a3/b3,a1/a2,a2/a3,a3/a4,a2/b3,a1/b2}
    \draw (\from) -- (\to);
\end{tikzpicture}
\begin{tikzpicture}
[scale=1.5,auto=left,every node/.style={circle,fill=gray!30}]
\node (z) at (2,0.5) [fill = white] {Type 3};
\node (r) at (2,1) [draw = black] {$4$};
\node (a1) at (0.5,2) [draw = black] {$1$};
\node (a2) at (1.2,2) [draw = black] {};
\node (a3) at (2,2) [draw = black] {};
\node (a4) at (2.8,2) [draw = black] {};
\node (a5) at (3.5,2) [draw = black] {$2$};
\node (b2) at (1.2,3) [draw = black] {};
\node (b3) at (2,3) [draw = black] {};
\node (b4) at (2.8,3) [draw = black] {};
\foreach \from/\to in {r/a1,r/a2,r/a3,r/a4,a2/b2,a3/b3,a1/a2,a2/a3,a3/a4,a1/b2,a2/b3,a5/a4,r/a5,a3/b4,b4/a4}
    \draw (\from) -- (\to);
\end{tikzpicture}
\caption{The figure shows three graphs whose vertices are partially colored by the set $\{1,2,3,4\}$. Bob wins the connected coloring game on $G$ played with four colors whenever one of these three types of graphs is isomorphic as a partially colored graph to a subgraph of $G$.} 
\label{figTypes}
\end{figure}

\begin{theorem}
There exists an outerplanar $2$-tree $G$ such that $ \chi_{cg}(G) = \col_{cg}(G) = 5$.
\end{theorem}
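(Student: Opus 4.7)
The plan is to combine the upper bound already in hand with a lower bound delivered by a winning strategy for Bob. Since the graph $G$ depicted in Figure \ref{fig:5} is a $2$-tree, the previous theorem gives $\col_{cg}(G) \leq 5$, and $\chi_{cg}(G) \leq \col_{cg}(G)$ is automatic. It therefore suffices to prove $\chi_{cg}(G) \geq 5$, i.e., to exhibit a winning strategy for Bob in the connected coloring game on $G$ played with four colors. Before this, I would quickly verify that $G$ is an outerplanar $2$-tree: it is built from the edge $rw_0$ by attaching each $w_i$ to the edge $rw_{i-1}$ for $i = 1, \ldots, 48$, and then each $v_j$ to the edge $w_{j-1}w_j$; every vertex lies on the outer face of the drawing shown, so $G$ is outerplanar.

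The core of the proof is an explicit strategy by which Bob forces the colored state of $G$ to contain, as a partially colored subgraph, one of the three \emph{dangerous} patterns depicted in Figure \ref{figTypes}. I would first show that each of Types 1--3 is a losing position for Alice. Type 1 is immediate: its central uncolored vertex has four already-colored neighbors bearing four distinct colors, so the game must terminate with that vertex uncolored, and Bob wins. For Types 2 and 3, I would identify a short sequence of subsequent moves by which Bob upgrades the configuration into a Type 1 pattern at a nearby middle-row vertex, exploiting the fact that each uncolored middle-row vertex in the template has at most one color remaining available from $\{1,2,3,4\}$, so its eventual color is forced in a way that propagates a forbidden palette to a neighbour.

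To produce one of the dangerous configurations, Bob proceeds by case analysis on Alice's opening move. Whether Alice first colors $r$, some $w_i$, or some $v_j$, Bob replies by coloring a middle-row vertex close to $r$; since $r$ is adjacent to every $w_i$, from this point onward every triangle $(w_{j-1}, w_j, v_j)$ is adjacent to the colored component and so remains legal for Bob under the connectedness restriction. Bob then uses his subsequent turns to drop the three colored vertices required for one of the dangerous templates into a still-untouched triangle region, relying on the abundance of $48$ triangles in $G$ to absorb Alice's disruptions. The main obstacle I expect is the case analysis itself: one must verify that every Alice response, in particular those that prematurely constrain the palette at a prospective Type 1 centre or lock in a consistent $3$-coloring of a stretch of the middle path, can be countered by Bob redirecting to another fresh triangle. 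The parameter $48$ is evidently calibrated so that the supply of fresh triangles strictly exceeds the maximum number Alice can neutralize before Bob completes a Type 1/2/3 pattern, and formalising this count via a suitable monovariant on ``fresh triangles remaining'' is the delicate step.
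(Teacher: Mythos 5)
Your overall architecture matches the paper's: upper bound from the $2$-tree theorem, lower bound by forcing one of the partially colored templates of Figure \ref{figTypes}, with Types 2 and 3 cascading down to Type 1. However, there are three concrete gaps. First, the reduction ``it suffices to exhibit a winning strategy for Bob with four colors'' is not valid: $\chi_{cg}(G) \geq 5$ requires that Alice lose with \emph{every} $k \leq 4$, and it is not known (for these games) that Bob winning with $k$ colors implies he wins with fewer. The paper separately disposes of two colors ($G$ is not bipartite) and three colors ($G$ is uniquely $3$-colorable, so all $v_i$ must get the same color, which Bob can prevent); you must do the same.

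Second, your justification of legality under the connectivity restriction is wrong as stated: after Bob colors ``a middle-row vertex close to $r$,'' a distant triangle $(w_{j-1},w_j,v_j)$ is \emph{not} adjacent to the colored component merely because $r$ is adjacent to every $w_i$ --- that only helps once $r$ itself is colored. The paper has Bob spend up to two moves specifically to get $r$ colored (say with color $4$); only then is every $w_i$ a legal move for Bob for the rest of the game. Third, the closing step is not a counting argument over $48$ triangles with a monovariant. The actual mechanism is a two-move construction with two-sided redundancy: once $r$ is colored, at most four other vertices are colored, so some window $w_i,\dots,w_{i+8}$, $v_{i+1},\dots,v_{i+7}$ is entirely uncolored; Bob colors $w_{i+4}$ with $1$, and since Alice gets only one intervening move she cannot block both completions, so Bob then colors $w_i$ or $w_{i+8}$ with $2$ to produce a Type 3 configuration. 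Without pinning down this window-plus-redundancy argument (and without actually carrying out the case analyses showing Types 1--3 are wins for Bob, which your sketch defers), the proof is not complete.
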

\begin{proof}
We will consider the graph $G$ from Figure \ref{fig:5}, which is an outerplanar $2$-tree on $98$ vertices. We will consider a connected coloring game between Alice and Bob on $G$ and show that Alice may not win with fewer than five colors. Bob will win if the game is played with two colors, as $G$ is not bipartite. If the game is played with three colors, then as $G$ is uniquely $3$-colorable, there exists a unique coloring up to permutation with which Alice may win. In this unique coloring, all vertices $v_i$ must receive the same color. It is not difficult to show that Alice cannot prevent Bob from coloring two distinct vertices $v_i$ and $v_j$ with different colors and hence winning the game. 

Finally, we consider the connected coloring game played with four colors. We will first show that if $G$ has a subgraph that is isomorphic as a partially colored graph to one of the graphs in Figure \ref{figTypes}, then Bob has a winning strategy. In demonstrating this claim, we will assume that if $G$ has a subgraph $H$ isomorphic to one of the types of Figure \ref{figTypes}, then Alice and Bob will only color vertices of $H$. In fact, Alice may continue to color vertices outside of $H$ and thereby ``pass" in the coloring game on $H$, but it is straightforward to check in the following arguments that Alice gains no advantage by refusing to color a vertex from $H$.

\begin{claim}
Suppose that at some point of the game, $G$ contains a subgraph that is isomorphic as a partially colored subgraph to the graph of Type 1 in Figure \ref{figTypes}. Then Bob has a strategy to win the game.
\end{claim}
\begin{proof}
This claim is trivial.
\end{proof}
\begin{claim}
Suppose that at some point of the game, $G$ contains a subgraph that is isomorphic as a partially colored subgraph to the graph of Type 2 in Figure \ref{figTypes}. Then Bob has a strategy to win the game. 
\end{claim}
\begin{proof}
Suppose the graph of Type $2$ occurs as a partially colored subgraph of $G$. This graph has three uncolored vertices, which we refer to as North, West, and Southeast. If it is Bob's turn, then Bob may win by coloring Southeast with $3$, which gives a subgraph of Type $1$. 

Suppose that it is Alice's move. By the connectivity condition, Alice must color either West or Southeast. If Alice colors Southeast with $1$, then Bob may obtain a subgraph of Type 1 by coloring North with $3$. If Alice colors West with $3$, then Bob may color North with $1$ and again obtain a subgraph of Type $1$. If Alice colors Southeast with $3$, then Bob wins immediately. Therefore, Bob has a strategy to win the game. 
\end{proof}
\begin{claim}
Suppose that at some point of the game, $G$ contains a subgraph that is isomorphic as a partially colored subgraph to the graph of Type 3 in Figure \ref{figTypes}. Then Bob has a strategy to win the game. 
\end{claim}
\begin{proof}
The graph of Type $3$ has six uncolored vertices. We refer to the upper three vertices, from left to right, as $a_1, a_2, a_3$. We refer to the lower three vertices, from left to right, as $b_1, b_2, b_3$.

If it is Bob's turn, then Bob colors $a_1$ with $3$. If Alice colors $b_1$ with $2$, then Bob creates a subgraph of Type 2 by coloring $a_2$ with $1$. If Alice colors $b_2$ with $3$ or $1$, then Bob creates a subgraph of Type 1 by coloring $a_3$ respectively with $1$ or $3$. Any other move that Alice plays will either create a subgraph of Type 1 or allow Bob to create a subgraph of Type 1 by coloring a vertex with $2$ on the next turn.  

Suppose, on the other hand, that it is Alice's turn. If Alice plays $2$ or $3$ at $a_1$, the Bob wins with a Type 1 subgraph by playing a $3$ or $2$ respectively at $b_2$. If Alice plays $4$ at $a_1$, then Bob plays $3$ at $b_2$, threatening to play $2$ at $a_2$ and to play $1$ at $a_3$, both of which are a win for Bob by a Type 1 subgraph. Alice cannot address both of these threats, so Bob wins. 

If Alice plays $2$ at $b_1$, then Bob creates a Type 2 subgraph by playing $1$ at $a_2$. If Alice plays $3$ at $b_1$, then Bob plays $2$ at $a_2$, again creating a subgraph of Type 2.

If Alice colors $b_2$ with $1$ or $3$, then Bob wins by coloring $a_3$ with $3$ or $1$, respectively. If Alice colors $b_2$ with $2$, then Bob wins by coloring $a_1$ with $3$. 

If Alice colors $b_3$ with $1$ or $3$, then Bob creates a Type $2$ subgraph by coloring $a_1$ with $3$. %. If Alice colors $b_3$ with $3$, then Bob colors $a_1$ with $3$ and creates a subgraph of Type $2$. 
\end{proof}
We now describe Bob's strategy. First, Bob uses at most two moves to ensure that $r$ is colored. We assume without loss of generality that $r$ is colored with the color $4$. Then, on Bob's first move after $r$ is colored, at most four vertices apart from $r$ are colored, and hence there must exist a value $i, 0 \leq i \leq 40$ such that $w_{i}, w_{i+1}, \dots, w_{i+8}$ and $v_{i+1}, \dots, v_{i+7}$ are all uncolored. Bob will color $w_{i+4}$ with the color $1$. On Bob's next turn, Bob will be able to use color $2$ to color either $w_i$ or $w_{i+8}$ and create a subgraph in $G$ isomorphic as a partially colored graph to the graph of Type 3 from Figure \ref{figTypes}. Thus Bob has a strategy to win the game.
\end{proof}
We conclude the section by noting that when the aim is only to show that $\col_{cg}(G) = 5$, Bob has a much simpler strategy, and $G$ can be made much smaller while still satisfying $\col_{cg}(G) = 5$. We encourage the reader to find a simplified strategy for Bob in the connected graph marking game on $G$, as well as a way to reduce the size of $G$ while still letting $G$ satisfy $\col_{cg}(G) = 5$.

\raggedright
\bibliographystyle{abbrv}
\bibliography{connectedBib}

\end{document}